\numberwithin{equation}{section}
\numberwithin{figure}{section}
\numberwithin{table}{section}
\newtheorem{theorem}{Theorem}[section]
\newtheorem{lemma}{Lemma}[section]
\newtheorem{note}{Note}[section]
\numberwithin{equation}{section}
\begin{document}

\begin{center}

{\Large\bf Beyond the Pearson correlation: heavy-tailed risks, weighted Gini correlations, and a Gini-type weighted insurance pricing model}

\vspace*{7mm}

{\large Edward Furman\footnote{Corresponding author.
Email: efurman@mathstat.yorku.ca
}}

\medskip

\textit{Department of Mathematics and Statistics, York University,\\
Toronto, Ontario M3J 1P3, Canada}

\bigskip

{\large Ri\v cardas Zitikis}

\medskip

\textit{Department of Statistical and Actuarial Sciences, University of
Western Ontario,\\ London, Ontario N6A 5B7, Canada}

\end{center}

\medskip

\begin{quote}
\textbf{Abstract.} Gini-type correlation coefficients have become increasingly important in a variety of research areas, including economics, insurance and finance, where modelling with heavy-tailed distributions is of pivotal importance. In such situations, naturally, the classical Pearson correlation coefficient is of little use. On the other hand, it has been observed that when light-tailed situations are of interest, and hence when both the Gini-type and Pearson correlation coefficients are well-defined and finite, then these coefficients are related and sometimes even coincide. In general, understanding how the correlation coefficients above are related has been an illusive task. In this paper we put forward arguments that establish such a connection via certain regression-type equations. This, in turn, allows us to introduce a Gini-type Weighted Insurance Pricing Model that works in heavy-tailed situation and thus provides a natural alternative to the classical Capital Asset Pricing Model. We illustrate our theoretical considerations using several bivariate distributions, such as elliptical and those with heavy-tailed Pareto margins.

\medskip

\textit{Key words and phrases}: Pearson correlation, weighted Gini correlation, capital asset pricing model, weighted insurance pricing model, bivariate distributions.

\medskip

\textit{JEL classification codes:} IM10, IM30, IE43, G32.

\end{quote}

\newpage

\section{Introduction}
\label{sec-intro}

The topic of measuring association has been of profound interest in many theoretical and applied research areas. The Pearson correlation coefficient, $\rho:=\rho[X, Y]$ for two random variables (r.v.'s) $X$ and $Y$  has arguably been one of the most popular members of every researcher's toolbox, notwithstanding its known shortcomings, and especially the requirement of finite second moments. The latter requirement can be a real impediment because modelling in economics, finance, and insurance frequently requires heavy-tailed distributions. Furthermore, the coefficient $\rho  $ can turn out to be very small, and even zero, when the r.v.'s $X$ and $Y$ are  strongly dependent. This should not, of course, be surprising because $\rho$ aggregates, in the form of an integral, the values $F_{X,Y}(x,y)-F_X(x)F_Y(y)$ over all real $x $ and $ y$, where $F_{X,Y}$ denotes the joint cumulative distribution function (c.d.f.) of $X$ and $Y$, and  $F_X$ and $F_Y$ are the respective marginal c.d.f.'s. The desire to avoid some of these, and other, limitations of the Pearson correlation coefficient has naturally grown into a strong impetus for seeking alternatives (cf., e.g., Schechtman and Yitzhaki, 2013).

The monograph by Schechtman and Yitzhaki (2013) makes a number of convincing arguments, supported by illustrative examples, in favour of the extended Gini correlation coefficient
\begin{equation}
\label{Gini-cor-0}
\Gamma_\gamma[X, Y]
=\frac{\mathbf{Cov}[X,(1-F_Y(Y))^{\gamma}]}{\mathbf{Cov}[X, (1-F_X(X))^{\gamma}]}, 
\end{equation}
indexed by parameter $\gamma> 0$. When $\gamma =1$, then the extended Gini correlation coefficient reduces to the classical Gini correlation coefficient, succinctly 
\[
\Gamma[X, Y]= \frac{\mathbf{Cov}[X,F_Y(Y)]}{\mathbf{Cov}[X, F_X(X)]}.
\]
Various properties of  $\Gamma_\gamma[X, Y]$ have been documented and discussed in the aforementioned monograph (see also 
Samanthi et al., 2016). For example, we learn that when the pair $(X, Y)$ follows the bivariate normal distribution, then $\Gamma_\gamma[X, Y]$ coincides with the Pearson correlation coefficient $\rho[X, Y]$ irrespectively of the value of $\gamma> 0$. This property, obviously, does not hold in general, which gives rise to a natural question as to when the two coefficients are equal, but when they are not, then how much they differ. These are among the problems that we tackle in this paper.

On a different note,
the power weight function $w(t)=t^{\gamma }$ that transforms the de-cumulative distribution functions (d.d.f.'s) $\overline{F}_X(x):=1-F_X(x)$ and $\overline{F}_Y(y):=1-F_Y(y)$ in the definition of $\Gamma_\gamma[X, Y]$ might not be sufficiently flexible to adequately emphasize (or de-emphasize) certain portions of the ranks of $X$ and $Y$. For example, in prospect theory (e.g., Tversky and Kahneman, 1992; Sriboonchita at al., 2009; Wakker, 2010; and references therein) we find arguments in favour of sigmoidal and more complexly-shaped weight functions. To accommodate various weighing designs, we therefore suggest using the weighted Gini correlation coefficient
\begin{equation}\label{Cor-w}
C_w[X,Y]=\frac{\mathbf{Cov}[X,w(1-F_Y(Y))]}{\mathbf{Cov}[X, w(1-F_X(X))]},
\end{equation}
where the weight function $w:[0, 1]\rightarrow [0, 1]$ can be any non-decreasing function for which the numerator and the denominator are well-defined and finite. The methodology that we develop in this paper can successfully tackle this correlation coefficient for large classes of weight functions and bivariate distributions.

We have organized the rest of this paper as follows. In Section \ref{sec-MainRes} we present and discuss general properties of the weighted Gini correlation coefficient, as well as its relationships to the Capital Asset Pricing Model (CAPM) and Weighted Insurance Pricing Model (WIPM). In Section \ref{sub-sec-comp}, we present a technique that relates the weighted Gini correlation coefficient to that of Pearson. Since the latter is closely linked to the bivariate normal distribution, in the same section we discuss the aforementioned link in the case of bivariate normal and, more generally, elliptical distributions. In Sections \ref{sec-Par-Arnold}, \ref{sec-Par-SF1} and \ref{sec-Par-SF2} we deal, in increasing complexity, with three bivariate Pareto-type distributions that model pairs $(X,Y)$ with i) exchangeable margins and linear regression, ii) non-exchangeable margins and linear regression, and iii) non-exchangeable margins and non-linear regression. Proofs are relegated to Appendix \ref{appendix}. To somewhat simplify the presentation, throughout the paper we use $\mathcal{W}$ to denote the class of non-decreasing (weight) functions $w:[0, 1]\rightarrow [0, 1]$.

\section{Properties of weighted Gini correlations}
\label{sec-MainRes}

We have already noted several drawbacks of the Pearson correlation coefficient $\rho[X, Y]$ when dealing with financial and insurance data, especially when the underlying r.v.'s do not have finite second moments. Some other well-known drawbacks of relevance to our present work are:
\begin{itemize}
\item The infinum and supremum of $\rho[X, Y]$ over all copulas governing the dependence between $X$ and $Y$ generally depend on the marginal c.d.f.'s $F_X$ and $F_Y$ (Shih and Huang, 1992). Specifically, given an arbitrary pair of marginal c.d.f.'s, we cannot claim that there is a pair $(X,Y)$ whose Pearson correlation coefficient is $+1$, but there is such a pair (with its dependence described by the co-monotonic copula) in the case of the Gini correlation coefficient.
\item The Pearson correlation coefficient $\rho[X, Y]$ is not invariant under increasing and non-linear transformations of the r.v.'s $X$ and $Y$
(Kendall and Stuart, 1979). In the case of the Gini correlation coefficient we have that $\Gamma[X,h(Y)]=\Gamma[X,Y]$ for any increasing function $h$.
\end{itemize}

Among the advantages of the Pearson correlation coefficient are its intuitive appeal and  computing easiness. The coefficient manifests naturally and plays a pivotal role in a myriad of situations. For example, it is a parameter of the bivariate normal distribution, and it measures the non-diversifiable risk in the Capital Asset Pricing Model (CAPM) (Sharpe, 1964; Lintner, 1965;
Black, 1972) and in its insurance counterpart (Furman and Zitikis, 2009) called the Weighted Insurance Pricing Model (WIPM).

We next discuss properties of the weighted Gini correlation coefficient and in this way highlight its superiority (at least in insurance and financial contexts) when compared to the Pearson correlation coefficient.

\begin{theorem}[Normalization]\label{th-1}
Let the c.d.f.'s $F_X$ and $F_Y$ be continuous functions. Then 
\begin{equation}\label{c-1}
-\lambda_w[X] \le C_w[X,Y]\le 1
\end{equation}
for every $w\in \mathcal{W}$, where
\[
\lambda_w[X]=\frac{\mathbf{Cov}[X,w(F_X(X))]}{\mathbf{Cov}[X, w^*(F_X(X))]}
\]
with $w^*(t)=1-w(1-t)$. Furthermore,
\begin{enumerate}[\rm(a)]
\item
the upper bound is achieved with $Y=X$, in which case we have $C_w[X,X]= 1$;
\item
the lower bound is achieved with $Y=-X$, in which case $C_w[X,-X]= -\lambda_w[X]$;
\item when $w(t)=t$ (the classical Gini correlation case), then $\lambda_w[X]=1$.
\end{enumerate}
\end{theorem}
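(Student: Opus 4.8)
The plan is to reduce both the numerator and the denominator of $C_w[X,Y]$ to covariances of the single form $\mathbf{Cov}[X, g(W)]$, where $g(t):=w(1-t)$ is \emph{non-increasing} (because $w\in\mathcal{W}$ is non-decreasing) and $W$ is uniform on $[0,1]$. Since $F_X$ and $F_Y$ are continuous, both $U:=F_X(X)$ and $V:=F_Y(Y)$ are uniformly distributed on $[0,1]$, so I would write the numerator as $N:=\mathbf{Cov}[X,g(V)]$ and the denominator as $D:=\mathbf{Cov}[X,g(U)]$. Because $g(U)$ is a non-increasing transform of $X$, the denominator satisfies $D\le 0$; throughout one assumes $D<0$ (otherwise $C_w[X,Y]$ is undefined), so that dividing by $D$ reverses inequalities.

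The heart of the argument is to sandwich $N$ between its two Fr\'echet--Hoeffding extremes. Using Hoeffding's covariance identity, $\mathbf{Cov}[X,g(V)]=\int\!\!\int\,[P(X\le x,\,V\le v)-F_X(x)\,v]\,dx\,dg(v)$, together with the Fr\'echet--Hoeffding bounds on the joint law of $(X,V)$ with its fixed continuous margins, the covariance is minimized under the co-monotone coupling $V=F_X(X)=U$ and maximized under the counter-monotone coupling $V=1-F_X(X)=1-U$, the direction being dictated by $dg\le 0$. Equivalently, the rearrangement inequality applied to $X$ and $g(V)$ (whose marginal law is fixed, as $V$ is uniform) yields
\[
\mathbf{Cov}[X,g(U)]\;\le\;N\;\le\;\mathbf{Cov}[X,g(1-U)].
\]
Since $g(U)=w(1-F_X(X))$ and $g(1-U)=w(F_X(X))$, the left end is exactly $D$, while the right end is $\mathbf{Cov}[X,w(F_X(X))]$.

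Dividing this chain by $D<0$ reverses it to give $\mathbf{Cov}[X,w(F_X(X))]/D\le C_w[X,Y]\le 1$, so it remains to identify the left-hand quantity with $-\lambda_w[X]$. This follows from the observation that $w^*(t)=1-w(1-t)=1-g(t)$, whence $\mathbf{Cov}[X,w^*(F_X(X))]=-\mathbf{Cov}[X,g(U)]=-D$; substituting into the definition of $\lambda_w[X]$ gives $\mathbf{Cov}[X,w(F_X(X))]/D=-\lambda_w[X]$, which is precisely the lower bound in \eqref{c-1}.

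The three special cases are then immediate. For (a), $Y=X$ gives $V=U$, so $N=D$ and $C_w[X,X]=1$, attaining the upper bound. For (b), continuity of $F_X$ yields $F_{-X}(-X)=1-F_X(X)$, hence $1-F_Y(Y)=F_X(X)$ and $N=\mathbf{Cov}[X,w(F_X(X))]$, the right-hand extreme, so $C_w[X,-X]=-\lambda_w[X]$. For (c), $w(t)=t$ forces $w^*(t)=t$ as well, so the numerator and denominator of $\lambda_w[X]$ coincide and $\lambda_w[X]=1$. I expect the main obstacle to be the rigorous justification of the sandwiching step: verifying that Hoeffding's identity applies under nothing more than monotonicity of $w$ (so that $g$ has bounded variation) and finiteness of the covariances, and that the Fr\'echet--Hoeffding extremes are genuinely attained by the co- and counter-monotone couplings once the sign of $dg$ has been accounted for.
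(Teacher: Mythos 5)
Your proof is correct and follows essentially the same route as the paper's: the Hoeffding--Cuadras covariance representation combined with the Fr\'echet--Hoeffding bounds, with the extremes attained by the co-monotone coupling $V=U$ and the counter-monotone coupling $V=1-U$. The only difference is cosmetic: you work with the non-increasing $g(t)=w(1-t)$ and a negative denominator (reversing inequalities upon division), whereas the paper works with the non-decreasing $w^*(t)=1-w(1-t)$ and a positive denominator --- mirror images of the same argument, and your explicit remark that one must assume the denominator is nonzero is a point the paper leaves implicit.
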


\begin{theorem}[Sign]\label{th-2}
For every $w\in \mathcal{W}$, we have
\begin{enumerate}[\rm(a)]
\item\label{part-a}
$C_w[X,Y]=0$ when $X$ and $Y$ are independent;
\item\label{part-b}
$C_w[X,Y]\ge 0$ when $X$ and $Y$ are positively quadrant dependent (PQD);
\item\label{part-c}
$C_w[X,Y]\le 0$ when $X$ and $Y$ are negatively quadrant dependent (NQD).
\end{enumerate}
\end{theorem}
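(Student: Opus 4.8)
The plan is to reduce the whole statement to a sign analysis of the numerator and the denominator of $C_w[X,Y]$ carried out separately, and then to combine them. The first thing I would record is that the denominator has a fixed sign irrespective of the dependence between $X$ and $Y$. Since $w\in\mathcal{W}$ is non-decreasing while $x\mapsto 1-F_X(x)$ is non-increasing, the composition $x\mapsto w(1-F_X(x))$ is non-increasing, so $w(1-F_X(X))$ is an anti-monotone function of $X$. By the classical Chebyshev covariance inequality, the covariance of a random variable with a non-increasing function of itself is non-positive; assuming the non-degeneracy needed for $C_w[X,Y]$ to be defined at all (i.e. the denominator is nonzero), we obtain $\mathbf{Cov}[X,w(1-F_X(X))]<0$. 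Consequently the sign of $C_w[X,Y]$ is exactly the opposite of the sign of the numerator $\mathbf{Cov}[X,w(1-F_Y(Y))]$.

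Part (a) is then immediate. Under independence, $w(1-F_Y(Y))$ is a function of $Y$ alone and is therefore independent of $X$, so the numerator vanishes and $C_w[X,Y]=0$.

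For parts (b) and (c) the key tool I would invoke is Hoeffding's covariance identity in its Lehmann form: for functions $g,h$ of bounded variation,
\[
\mathbf{Cov}[g(X),h(Y)]=\int_{\mathbb{R}}\int_{\mathbb{R}}\big(F_{X,Y}(x,y)-F_X(x)F_Y(y)\big)\,dg(x)\,dh(y).
\]
Applying this with $g(x)=x$, so that $dg\ge 0$, and $h(y)=w(1-F_Y(y))$, which is non-increasing so that $dh\le 0$, the product measure $dg\,dh$ is non-positive. Under PQD we have $F_{X,Y}(x,y)-F_X(x)F_Y(y)\ge 0$ everywhere, so the integral — that is, the numerator — is $\le 0$; dividing by the strictly negative denominator yields $C_w[X,Y]\ge 0$, which is (b). Under NQD the integrand is $\le 0$, so the numerator is $\ge 0$, and dividing by the negative denominator gives $C_w[X,Y]\le 0$, which is (c).

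The main obstacle I anticipate is purely technical: justifying the Hoeffding–Lehmann identity for the specific integrands at hand under only first-moment assumptions. One must confirm that $h(y)=w(1-F_Y(y))$ induces a (signed, here non-positive) Lebesgue–Stieltjes measure — which it does, being bounded and monotone — and that the double integral converges, so that the representation is legitimate and its interchange with the covariance is valid. A careful write-up would also isolate explicitly the non-degeneracy condition that keeps the denominator strictly negative, ensuring the ratio is well-defined and its sign unambiguous.
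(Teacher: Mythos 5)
Your proof is correct and follows essentially the same route as the paper's: the paper also handles the denominator via the covariance of $X$ with a monotone transform of its own ranks (citing Lehmann, 1966) and the numerator via the Hoeffding-type covariance representation (in Cuadras's 2002 generalization), with the quadrant-dependence sign of $F_{X,Y}(x,y)-F_X(x)F_Y(y)$ driving the conclusion. The only difference is bookkeeping: the paper first rewrites $C_w$ in terms of $w^*(t)=1-w(1-t)$ so that both transforms are non-decreasing and both covariances non-negative, whereas you keep the non-increasing transforms and track two negative signs that cancel in the ratio.
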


\begin{theorem}[Symmetry]\label{th-3}
Suppose there are real constants $a,c\in \mathbf{R}$ and positive constants $b,d>0$, such that the random variables $X_{a,b}=a+bX$ and $Y_{c,d}=c+dY$ are interchangeable, that is, the distribution of $(X_{a,b},Y_{c,d})$ is equal to that of $(Y_{c,d},X_{a,b})$.
Then for every $w\in \mathcal{W}$ we have
\begin{equation}\label{c-4}
C_w[X,Y]=C_w[Y,X].
\end{equation}
\end{theorem}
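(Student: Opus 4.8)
The plan is to reduce the claim to two ingredients: the invariance of $C_w$ under increasing affine transformations of each argument, and the interchangeability hypothesis itself. Once affine invariance is in hand, interchangeability of $X_{a,b}$ and $Y_{c,d}$ does all the remaining work, because $C_w$ is a functional of the joint law of its (ordered) pair of arguments. So the whole proof splits into first proving a normalization/invariance identity and then chaining it around an equality of distributions.

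First I would establish the affine-invariance identity
\begin{equation}\label{affine-inv}
C_w[a+bX,\,c+dY]=C_w[X,Y]\qquad\text{for all }a,c\in\mathbf{R},\ b,d>0.
\end{equation}
The key observation is the probability-integral-transform identity $F_{a+bX}(a+bX)=F_X(X)$, which holds pointwise as random variables whenever $b>0$, since $F_{a+bX}(y)=F_X((y-a)/b)$; consequently $1-F_{c+dY}(c+dY)=1-F_Y(Y)$ as well. Substituting these into the numerator and denominator of $C_w[a+bX,c+dY]$ and using that covariance is unaffected by an additive constant and is homogeneous in its first argument, one picks up a common factor $b$, namely $\mathbf{Cov}[a+bX,\,w(1-F_Y(Y))]=b\,\mathbf{Cov}[X,\,w(1-F_Y(Y))]$ and likewise for the denominator. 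These factors cancel in the ratio, yielding \eqref{affine-inv}. I would stress that positivity of $b$ and $d$ is essential: a negative scale would flip the rank transform and destroy the identity, which is precisely why the theorem restricts to $b,d>0$.

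With \eqref{affine-inv} available, I would finish as follows. Since $C_w[U,V]$ depends on $(U,V)$ only through the joint distribution of the ordered pair, and since by hypothesis the law of $(X_{a,b},Y_{c,d})$ coincides with the law of $(Y_{c,d},X_{a,b})$, we obtain $C_w[X_{a,b},Y_{c,d}]=C_w[Y_{c,d},X_{a,b}]$. Chaining this with two applications of \eqref{affine-inv} gives
\begin{equation*}
C_w[X,Y]=C_w[X_{a,b},Y_{c,d}]=C_w[Y_{c,d},X_{a,b}]=C_w[Y,X],
\end{equation*}
which is exactly \eqref{c-4}.

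The argument is short, so the only real subtlety is conceptual rather than computational: in general $C_w$ is \emph{not} symmetric, because its numerator ranks $Y$ while its denominator ranks $X$, and this built-in asymmetry is reconciled with the symmetry forced by interchangeability only after the affine normalization in \eqref{affine-inv}. I would therefore make sure the write-up isolates \eqref{affine-inv} cleanly and states explicitly that $C_w$ is a law-functional of the ordered pair, since that is the step which silently converts distributional interchangeability into the desired algebraic identity \eqref{c-4}.
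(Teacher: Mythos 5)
Your proof is correct and follows essentially the same route as the paper: both reduce $C_w[X,Y]$ to $C_w[X_{a,b},Y_{c,d}]$ via invariance under increasing affine transformations, and then transfer the interchangeability hypothesis through the fact that $C_w$ depends on its arguments only through the joint law of the ordered pair. The only cosmetic difference is that the paper justifies this law-functional step by expressing both covariances via Cuadras's generalization of Hoeffding's representation in terms of the bivariate and marginal c.d.f.'s, whereas you observe it directly (and you prove the affine invariance explicitly via $F_{a+bX}(a+bX)=F_X(X)$, which the paper treats as an elementary observation).
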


\begin{theorem}[Invariance under incrreasing transformations]\label{th-4}
Let $a\in \mathbf{R}$ and $b>0$ be some constants, and let $h:[0, 1]\rightarrow \mathbf{R}$ be an increasing function. Then for every $w\in \mathcal{W}$ we have
\begin{equation}\label{c-5}
C_w[a+bX,h(Y)]=C_w[X,Y].
\end{equation}
Hence, in particular, $C_w[a+bX,c+dY]=C_w[X,Y]$ for all real constants $a,c\in \mathbf{R}$ and all positive constants $b,d>0$.
\end{theorem}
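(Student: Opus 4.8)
The plan is to reduce the claim to two elementary facts: the bilinearity of covariance, which absorbs the affine map $x\mapsto a+bx$, and the rank-invariance of the probability integral transform under increasing maps, which neutralizes $h$. Throughout I write $U=a+bX$ and $V=h(Y)$, so that the left-hand side of \eqref{c-5} reads
\[
C_w[U,V]=\frac{\mathbf{Cov}[U,\,w(1-F_V(V))]}{\mathbf{Cov}[U,\,w(1-F_U(U))]},
\]
and the goal is to show that the numerator and the denominator acquire one and the same multiplicative factor, which then cancels.

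First I would handle the affine map. Since $b>0$, monotonicity gives $F_U(u)=F_X((u-a)/b)$ for all $u$, and evaluating at $u=U=a+bX$ yields the identity $F_U(U)=F_X(X)$; hence $w(1-F_U(U))=w(1-F_X(X))$. Consequently, using $\mathbf{Cov}[a+bX,Z]=b\,\mathbf{Cov}[X,Z]$ for any $Z$, the denominator becomes
\[
\mathbf{Cov}[U,\,w(1-F_U(U))]=b\,\mathbf{Cov}[X,\,w(1-F_X(X))].
\]
Next I would deal with $V=h(Y)$. Because $h$ is increasing, the events $\{h(Y)\le h(y)\}$ and $\{Y\le y\}$ coincide, so $F_V(h(y))=F_Y(y)$ for every $y$ and therefore $F_V(V)=F_Y(Y)$; this gives $w(1-F_V(V))=w(1-F_Y(Y))$ and, by the same bilinearity,
\[
\mathbf{Cov}[U,\,w(1-F_V(V))]=b\,\mathbf{Cov}[X,\,w(1-F_Y(Y))].
\]
Substituting the two displays into the expression for $C_w[U,V]$, the common factor $b$ cancels and we recover $C_w[X,Y]$, which is \eqref{c-5}. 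The final ``in particular'' assertion then follows at once by taking the increasing affine map $h(y)=c+dy$ with $d>0$.

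The step I expect to be the most delicate is the rank-invariance identity $F_V(V)=F_Y(Y)$: it hinges on $h$ being genuinely increasing, since a merely non-decreasing $h$ with a flat stretch could inflate $F_V(h(y))$ beyond $F_Y(y)$ on a set of positive probability and thereby break the cancellation. Strict monotonicity makes the two events above coincide surely, so no continuity of $F_Y$ is actually needed at this point; the only role of the continuity hypothesis on the c.d.f.'s (as in Theorem \ref{th-1}) is to ensure that the covariances remain well-defined and finite, which they do here, being scalar multiples of the original finite quantities.
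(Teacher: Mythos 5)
Your proof is correct and follows essentially the same route as the paper's: the paper's own (much terser) proof first reduces $C_w[a+bX,h(Y)]$ to $C_w[X,h(Y)]$ -- implicitly via the covariance bilinearity and $F_{a+bX}(a+bX)=F_X(X)$ that you spell out -- and then invokes exactly your rank-invariance identity $F_{h(Y)}(h(y))=F_Y(y)$. Your expanded treatment, including the cancellation of the common factor $b$ and the remark on why strict monotonicity of $h$ matters, simply makes explicit what the paper leaves to the reader.
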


Before stating our next theorem, we recall that for the set $\mathcal{X}$ of insurance risks $X$ and non-decreasing weight functions
$v:[0, \infty)\rightarrow[0, \infty)$, the functional
$\Pi_v:\mathcal{X}\times\mathcal{X}\rightarrow [0, \infty]$ defined by the equation 
\begin{equation}
\label{eq-Ewpcp}
\Pi_v[X,Y]=\frac{\mathbf{E}[Xv(Y)]}{\mathbf{E}[v(Y)]}
\end{equation}
is called the economic weighted premium calculation principle (p.c.p.),
whereas the equation (Furman and Zitikis, 2009)
\begin{equation}
\label{eq-WIPM}
\Pi_v[X, Y]=\mathbf{E}[X]+\rho[X, Y]
\sqrt{\frac{\mathbf{Var}[X]}{\mathbf{Var}[Y]}}
\left(
\pi_v[Y]-\mathbf{E}[Y]
\right), 
\end{equation}
which holds with $\pi_v[Y]:=\Pi_v[Y, Y]$ under certain assumptions on the joint c.d.f.\, of $(X,Y)$, is  referred to as the Weighted Insurance Pricing Model (WIPM). For example, two natural choices of the r.v.\, $Y$ would be i) leverage risk, which is of interest to the pricing actuary, and ii) the aggregate risk $S=X+Y$. In the latter case, the WIPM equation can be employed as an economic capital allocation rule, and we refer to Furman and Zitikis (2009) for details on this topic.

An obvious disadvantage of the WIPM -- akin to the CAPM -- is its reliance on the assumption that the variances of underlying r.v.'s are finite. Consequently, the WIPM cannot be applied to a multitude of risks. To circumvent the problem, in definition (\ref{eq-Ewpcp}) we set $v(y)=w\circ (1-F_Y(y))$, which gives rise to the following Gini economic p.c.p.
\begin{equation}
\label{eq-Ewgpcp}
\Pi_{G,w}[X, Y]=\frac{\mathbf{E}[Xw(1-F_Y(Y))]}{\mathbf{E}[w(1-F_Y(Y))]}.
\end{equation}

\begin{theorem}[Connection to WIPM]\label{th-5}
Let $(X, Y)$ be a pair of r.v.'s in $\mathcal{X}$.
When there exist constants $\alpha,\beta\in\mathbf{R}$ such that $\mathbf{E}[X|\ Y]=\alpha+\beta Y$, then the Gini WIPM counterpart to equation (\ref{eq-WIPM}) is
\begin{equation}
\Pi_{G,w}[X, Y]=\mathbf{E}[X]+C_w[X, Y]
\frac{\mathbf{Cov}[X,w(1-F_X(X))]}{\mathbf{Cov}[Y, w(1-F_Y(Y))]}
\left(
\pi_{G,w}[Y]-\mathbf{E}[Y]
\right), 
\label{eq-WIPM-gini}
\end{equation}
where $\pi_{G,w}[Y]:=\Pi_{G,w}[Y, Y]$.
\end{theorem}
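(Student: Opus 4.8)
The plan is to reduce everything to covariances of the bounded transform $w(1-F_Y(Y))$ and then exploit the linearity of the regression $\mathbf{E}[X\mid Y]=\alpha+\beta Y$. First I would record the two elementary rewritings that follow directly from the definition (\ref{eq-Ewgpcp}) of the Gini economic p.c.p.: subtracting $\mathbf{E}[X]$ (resp.\ $\mathbf{E}[Y]$) turns the ratio $\mathbf{E}[Xv]/\mathbf{E}[v]$ into $\mathbf{Cov}[X,v]/\mathbf{E}[v]$ with $v:=w(1-F_Y(Y))$, so that
\begin{align*}
\Pi_{G,w}[X, Y]-\mathbf{E}[X]&=\frac{\mathbf{Cov}[X,w(1-F_Y(Y))]}{\mathbf{E}[w(1-F_Y(Y))]},\\
\pi_{G,w}[Y]-\mathbf{E}[Y]&=\frac{\mathbf{Cov}[Y,w(1-F_Y(Y))]}{\mathbf{E}[w(1-F_Y(Y))]}.
\end{align*}
Both are pure algebra. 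Since $w$ is valued in $[0,1]$, the numerators are finite whenever $\mathbf{E}[|X|]$ and $\mathbf{E}[|Y|]$ are finite, so no second-moment assumption is needed here; this is precisely the feature that keeps the construction viable in the heavy-tailed regime.

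The heart of the argument is to relate the two numerators, and for this I would invoke the regression hypothesis through the tower property: for any measurable $g$ for which the quantities exist,
\[
\mathbf{Cov}[X,g(Y)]=\mathbf{Cov}\big[\mathbf{E}[X\mid Y],g(Y)\big]=\mathbf{Cov}[\alpha+\beta Y,g(Y)]=\beta\,\mathbf{Cov}[Y,g(Y)].
\]
Taking $g(y)=w(1-F_Y(y))$ gives $\mathbf{Cov}[X,w(1-F_Y(Y))]=\beta\,\mathbf{Cov}[Y,w(1-F_Y(Y))]$, and combining this with the two displays above yields the compact form $\Pi_{G,w}[X,Y]-\mathbf{E}[X]=\beta\,(\pi_{G,w}[Y]-\mathbf{E}[Y])$, which is the genuine Gini analogue of the CAPM/WIPM equation (\ref{eq-WIPM}) with $\beta$ playing the role of systematic, non-diversifiable risk.

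It then remains to re-express the slope $\beta$ in the Gini-correlation form appearing in (\ref{eq-WIPM-gini}). Dividing the identity $\mathbf{Cov}[X,w(1-F_Y(Y))]=\beta\,\mathbf{Cov}[Y,w(1-F_Y(Y))]$ by $\mathbf{Cov}[Y,w(1-F_Y(Y))]$ and then substituting the definition (\ref{Cor-w}), namely $\mathbf{Cov}[X,w(1-F_Y(Y))]=C_w[X,Y]\,\mathbf{Cov}[X,w(1-F_X(X))]$, gives
\[
\beta=\frac{\mathbf{Cov}[X,w(1-F_Y(Y))]}{\mathbf{Cov}[Y,w(1-F_Y(Y))]}
=C_w[X,Y]\,\frac{\mathbf{Cov}[X,w(1-F_X(X))]}{\mathbf{Cov}[Y,w(1-F_Y(Y))]}.
\]
Inserting this into $\Pi_{G,w}[X,Y]-\mathbf{E}[X]=\beta\,(\pi_{G,w}[Y]-\mathbf{E}[Y])$ produces exactly (\ref{eq-WIPM-gini}).

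I expect the only real subtlety to be bookkeeping rather than depth. The tower-property step silently requires that $\mathbf{E}[X\,w(1-F_Y(Y))]$ and $\mathbf{E}[Y\,w(1-F_Y(Y))]$ be finite and that the covariances in the denominators not vanish, so I would state these as the standing conditions under which (\ref{eq-WIPM-gini}) is meaningful. It is also worth flagging a conceptual point: once the coefficient is written in the $C_w$-form of (\ref{eq-WIPM-gini}), the equation becomes an algebraic identity (valid whenever the displayed ratios are well defined), because the two appeals to linearity effectively cancel. The true content of the regression hypothesis is therefore not the validity of the displayed equation but the identification of its coefficient $C_w[X,Y]\,\mathbf{Cov}[X,w(1-F_X(X))]/\mathbf{Cov}[Y,w(1-F_Y(Y))]$ with the regression slope $\beta$, which is what endows (\ref{eq-WIPM-gini}) with its interpretation as a pricing model.
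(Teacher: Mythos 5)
Your proposal is correct and takes essentially the same route as the paper: your tower-property step $\mathbf{Cov}[X,w(1-F_Y(Y))]=\beta\,\mathbf{Cov}[Y,w(1-F_Y(Y))]$ is precisely the paper's Lemma \ref{lemma-el-1}, your intermediate identity $\Pi_{G,w}[X,Y]-\mathbf{E}[X]=\beta\left(\pi_{G,w}[Y]-\mathbf{E}[Y]\right)$ is the paper's equation (\ref{qqq-0}), and the final rewriting of $\beta$ via the definition of $C_w[X,Y]$ matches the paper's concluding step. Your explicit bookkeeping of integrability and non-vanishing denominators is a reasonable addition that the paper leaves implicit, but it does not change the argument.
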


Unlike the WIPM equation (\ref{eq-WIPM}), Gini WIPM equation (\ref{eq-WIPM-gini}) does not need finiteness of the second moments of $X$ and $Y$, and as such it can be used for pricing/measuring risks with infinite variances. Also, Theorem \ref{th-5} shows that while the Pearson correlation coefficient arises in the context of the CAPM, the weighted Gini correlation coefficient $C_w[X, Y]$ manifests in the context of the Gini variant of WIPM. Finally, Theorem \ref{th-5} provides yet another justification for the exploration of computational tractability of the class of Gini correlations, with which we deal throughout the rest of this paper.

\section{The Gini and Pearson correlations connected}
\label{sub-sec-comp}

At first sight, the weighted Gini correlation coefficient $C_w[X,Y]$ and the Pearson correlation coefficient $\rho[X, Y]$ seem to be quite different: the former aggregates the value of $X$ with the ranks of $Y$, and also with the ranks of $X$, whereas the Pearson correlation coefficient couples the values of $X$ and $Y$ and does not rely on the ranks of any of them. Nevertheless, the two coefficients are related as the following theorem shows.

\begin{theorem}\label{th-el-1}
When there exist constants $\alpha,\beta\in\mathbf{R}$ such that $\mathbf{E}[X|\ Y]=\alpha+\beta Y$, then the equation
\begin{equation}
\label{eq-0c}
C_w[X, Y]=\beta\frac{\mathbf{Cov}[Y, w(1-F_Y(Y))]}{\mathbf{Cov}[X, w(1-F_X(X))]}
\end{equation}
holds for every $w\in \mathcal{W}$. The converse is also true, assuming continuity of the c.d.f.\, $F_Y$.
\end{theorem}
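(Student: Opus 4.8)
The plan is to reduce both directions to a single covariance identity obtained from the tower property. Throughout, I would write $U=1-F_Y(Y)$, so that for every $w\in\mathcal{W}$ the random variable $w(1-F_Y(Y))=w(U)$ is $\sigma(Y)$-measurable; this is the only structural feature of the numerator of $C_w[X,Y]$ that the argument needs.

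For the forward implication, I would compute $\mathbf{Cov}[X,w(U)]$ by conditioning on $Y$. Since $w(U)$ is a function of $Y$, the tower property gives $\mathbf{E}[Xw(U)]=\mathbf{E}[w(U)\,\mathbf{E}[X\mid Y]]=\mathbf{E}[w(U)(\alpha+\beta Y)]$, while $\mathbf{E}[X]=\alpha+\beta\,\mathbf{E}[Y]$. Substituting these into $\mathbf{Cov}[X,w(U)]=\mathbf{E}[Xw(U)]-\mathbf{E}[X]\,\mathbf{E}[w(U)]$, the $\alpha$-terms cancel and one is left with $\mathbf{Cov}[X,w(U)]=\beta\,\mathbf{Cov}[Y,w(U)]$. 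Dividing through by $\mathbf{Cov}[X,w(1-F_X(X))]$ and recalling the definition (\ref{Cor-w}) of $C_w[X,Y]$ yields (\ref{eq-0c}). This direction needs no continuity assumption; it is essentially a law-of-total-covariance computation.

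For the converse, I would assume (\ref{eq-0c}) holds for every $w\in\mathcal{W}$; clearing denominators, this is equivalent to $\mathbf{Cov}[X-\beta Y,\,w(U)]=0$ for all $w\in\mathcal{W}$. Setting $g(Y):=\mathbf{E}[X\mid Y]-\beta Y$ and conditioning on $Y$ exactly as above rewrites the condition as $\mathbf{Cov}[g(Y),w(U)]=0$ for every $w\in\mathcal{W}$; the goal is to deduce that $g(Y)$ is almost surely constant, which is precisely the linear-regression conclusion $\mathbf{E}[X\mid Y]=\alpha+\beta Y$ with $\alpha=\mathbf{E}[g(Y)]$. Here continuity of $F_Y$ enters: it makes $U$ uniformly distributed on $[0,1]$ and, since flat stretches of $F_Y$ carry no probability mass, $\sigma(U)=\sigma(Y)$ up to null sets, so $g(Y)$ may be regarded as a function $\tilde g(U)$ of $U$ alone.

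The crux is extracting constancy from orthogonality against all of $\mathcal{W}$. The test functions I would use are the indicators $w_s(t)=\mathbf{1}\{t>s\}$, $s\in[0,1]$, each of which is non-decreasing with values in $\{0,1\}\subseteq[0,1]$ and hence lies in $\mathcal{W}$. Because $U$ is uniform, $\mathbf{Cov}[\tilde g(U),w_s(U)]=0$ becomes $\int_s^1\tilde g(u)\,du=(1-s)\,\mathbf{E}[\tilde g(U)]$ for every $s\in[0,1]$. Differentiating in $s$ via the Lebesgue differentiation theorem gives $\tilde g(s)=\mathbf{E}[\tilde g(U)]$ for almost every $s$, whence $\tilde g(U)$ equals the constant $\mathbf{E}[\tilde g(U)]$ almost surely, as required. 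I expect this last step — confirming that the indicator weights genuinely belong to $\mathcal{W}$ and legitimately differentiating the resulting integral identity — to be the main obstacle, in contrast to the purely algebraic forward direction.
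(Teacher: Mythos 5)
Your proof is correct, and while your forward direction coincides with the paper's --- it is exactly Lemma \ref{lemma-el-1}, established by the same tower-property computation $\mathbf{E}[Xw(1-F_Y(Y))]=\mathbf{E}[(\alpha+\beta Y)w(1-F_Y(Y))]$ --- your converse takes a genuinely different, and in fact tighter, route. The paper sets $Z=X-\beta Y$ and tries to deduce from $\mathbf{E}\big[(Z-\mathbf{E}[Z])w(1-F_Y(Y))\big]=0$, for weights with $w>0$ on $(0,1)$, that $Z-\mathbf{E}[Z]=0$ almost surely, using continuity of $F_Y$ only to guarantee $F_Y(Y)\in(0,1)$ a.s.\ so that the weight is positive. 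That target is strictly stronger than linearity of the regression and does not actually follow from the hypothesis: if $X=\beta Y+\varepsilon$ with $\varepsilon$ centered, nondegenerate, and independent of $Y$, then (\ref{eq-0cc}) holds for every $w\in\mathcal{W}$ while $Z=\varepsilon$ is not constant; correspondingly, the paper's contradiction step overlooks the possible cancellation between the contributions from $A^{+}$ and $A^{-}$. Your argument repairs exactly this point: by conditioning on $Y$ first you replace $X-\beta Y$ by $g(Y)=\mathbf{E}[X\mid Y]-\beta Y$, so you aim only at constancy of the \emph{conditional} residual, which is precisely what the converse asserts. Your separation step is then genuinely complete: the indicator weights $w_s(t)=\mathbf{1}\{t>s\}$ do lie in $\mathcal{W}$, continuity of $F_Y$ makes $U=1-F_Y(Y)$ uniform and gives $Y=F_Y^{-1}(1-U)$ a.s.\ (flat stretches of $F_Y$ carry no mass), so $g(Y)=\tilde g(U)$, and the resulting identity $\int_s^1 \tilde g(u)\,du=(1-s)\int_0^1 \tilde g(u)\,du$ for all $s\in[0,1]$ forces $\tilde g$ to be constant a.e.; your Lebesgue-differentiation step is legitimate since $\tilde g\in L^1(0,1)$, and one can even avoid it by noting that two finite signed measures agreeing on all intervals $(s,1]$ coincide. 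In short, your forward half is the paper's; your converse trades the paper's ``pointwise degeneracy by contradiction'' for ``project onto $\sigma(Y)$, then separate with indicator weights,'' which matches the exact strength of the statement and closes the cancellation gap in the published argument.
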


The most important part of Theorem \ref{th-el-1} is of course the fact that having a linear regression function allows us to decompose the weighted (and thus  extended and classical) Gini correlation coefficient into i) covariance-type functionals based on the marginal c.d.f.'s $F_X$ and $F_Y$, and ii) the slope $\beta $ of the regression line whose immense role in the classical Capital Asset Pricing Model (CAPM) has been well explored and documented (e.g., Levy, 2011; and references therein). The main purpose of the converse of the theorem, which we establish under an additional but very mild condition, is to show that the main part cannot be improved in general, that is, the linearity of the regression function is pivotal for the form of $C_w[X, Y]$ spelled out on the right-hand side of equation (\ref{eq-0c}).

We now illustrate Theorem \ref{th-el-1} when the pair $(X, Y)$ follows the bivariate normal distribution $N_2(\boldsymbol{\mu}, \Sigma)$ with the vector $\boldsymbol{\mu}=(\mu_X,\mu_Y)$ of the marginal expectations and the symmetric matrix $\Sigma$ whose main diagonal is made up of the variances $\sigma_X^2=\mathbf{Var}[X]$ and
$\sigma_Y^2=\mathbf{Var}[Y]$, and the two off-diagonal entries are equal to the covariance $\sigma_{X,Y}=\mathbf{Cov}[X,Y]$. It is well known that the equation $\mathbf{E}[X|\ Y]=\alpha+\beta Y$ holds with the intercept $\alpha = \mu_X-\beta\mu_Y$ and the slope $\beta=\sigma_{X,Y}/\sigma_Y^2$.
Hence, an application of Theorem \ref{th-el-1} establishes the equation
\begin{equation}
\label{Cw-e0}
C_w[X, Y]=\rho[X, Y]
\end{equation}
for every $w\in \mathcal{W}$. This equation implies the well-known fact (e.g., Schechtman and Yitzhaki, 2013) that the Pearson and the extended Gini correlation coefficients are equal in the bivariate normal case. Hence, if the random pair $(X, Y)$ follows the bivariate normal distribution, then the Gini-type WIPM equation reduces to 
\begin{equation}
\Pi_{G,w}[X, Y]=\mathbf{E}[X]+\rho[X,Y]
\sqrt{\frac{\mathbf{Var}[X]}{\mathbf{Var}[Y]}}
\left(
\pi_{G,w}[Y]-\mathbf{E}[Y]
\right).
\label{eq-WIPM-gini-norm}
\end{equation}

More generally, let the pair $(X, Y)$ follow the bivariate elliptical distribution $E_2(\boldsymbol{\mu}, \Sigma)$ with the vector $\boldsymbol{\mu}=(\mu_X,\mu_Y)$ of marginal expectations and the positive-definite matrix $\Sigma$ whose diagonal entries are $\sigma_X^2$ and $\sigma_Y^2$ and the two identical off-diagonal entries are $\sigma_{X,Y}$. We note that, unlike in the bivariate normal case, the just introduced $\sigma$'s may or may not be the variances and covariances of $X$ and $Y$. Nevertheless, we have (e.g., Fang et al., 1990) the equation $\mathbf{E}[X|\ Y]=\alpha+\beta Y$ with the intercept $\alpha = \mu_X-\beta\mu_Y$ and the slope $\beta=\sigma_{X,Y}/\sigma_Y^2$, and so Theorem \ref{th-el-1} applies and gives the equation
\begin{equation}
\label{Cw-el}
C_w[X, Y]=\frac{\sigma_{X,Y}}{\sigma_X\sigma_Y}
\end{equation}
for every $w\in \mathcal{W}$. Consequently, just like in the bivariate normal case, the Pearson and extended Gini correlation coefficients coincide whenever the former exists. When it does not exist, which says that we are dealing with heavy-tailed random variables, then $\Gamma_\gamma[X, Y]$ and likewise $C_w[X, Y]$ can be viewed as  extensions of the Pearson correlation coefficient to those pairs of $X$ and $Y$ that are outside the domain of definition of $\rho[X, Y]$. Finally, 
the Gini-type WIPM equation is given in this case by

\begin{equation}
\Pi_{G,w}[X, Y]=\mathbf{E}[X]+\frac{\sigma_{X,Y}}{\sigma_Y^2}
\left(
\pi_{G,w}[Y]-\mathbf{E}[Y]
\right).
\label{eq-WIPM-gini-ell}
\end{equation}

\section{Exchangeable linearly-regressed margins}
\label{sec-Par-Arnold}

The elliptical distribution is a natural extension of the normal one and has served as an adequate model in many financial problems. Its role in Economics has not been particularly pronounced, where the classical Pareto distribution with its numerous extensions and variations have dominated the scene. Many insurance risk models have also relied on extensions and  generalizations of the Pareto distribution (e.g., Brazauskas and Serfling, 2003; Goovaerts et al., 2005). Because of this reason, and to emphasize that Theorem \ref{th-el-1} does not hinge on the symmetry of the joint distribution of $X$ and $Y$, we devote the rest of this paper to several bivariate Pareto-type distributions in the context of Theorem \ref{th-el-1}.

We begin with the univariate Pareto distribution of the 2nd kind, whose  d.d.f.\, is
\[
\overline{F}(x)=\left(1+\frac{x-\mu}{\sigma}\right)^{-\delta}
\]
for all $x>\mu$, where $\mu\in\mathbf{R}$ and $\sigma>0$ are location and scale parameters, respectively, and $\delta>0$ is tail index. The
distribution has been a classical example of heavy-tailness:
when $\delta\in (1, 2]$, then the variance does not exist, and
when $\delta\in (0, 1]$, then even the expectation does not exist.

When it comes to bivariate extensions of this distribution, there are many of them to consider. In this section we concentrate on the bivariate Pareto distribution of the 2nd type (Arnold, 1983) whose joint d.d.f.\, is
\begin{equation}
\label{ddf-Arnold}
\overline{F}_{X,Y}(x,y)
=\left(1+\frac{x-\mu_X}{\sigma_X}
+\frac{y-\mu_Y}{\sigma_Y}\right)^{-\delta}
\end{equation}
for all $x>\mu_X $ and $ y>\mu_Y$, where $\mu_X,\mu_Y\in\mathbf{R}$ are location parameters, $\sigma_X,\sigma_Y>0$ are scale parameters, and $\delta>0$ is tail index. (Despite notational similarities, the two $\mu$'s are not the means, and the two $\sigma$'s are not the standard deviations of $X$ and $Y$.) In Figure \ref{Fig-Arnold}
\begin{figure}[h!]
\centering
\includegraphics[width=12cm]{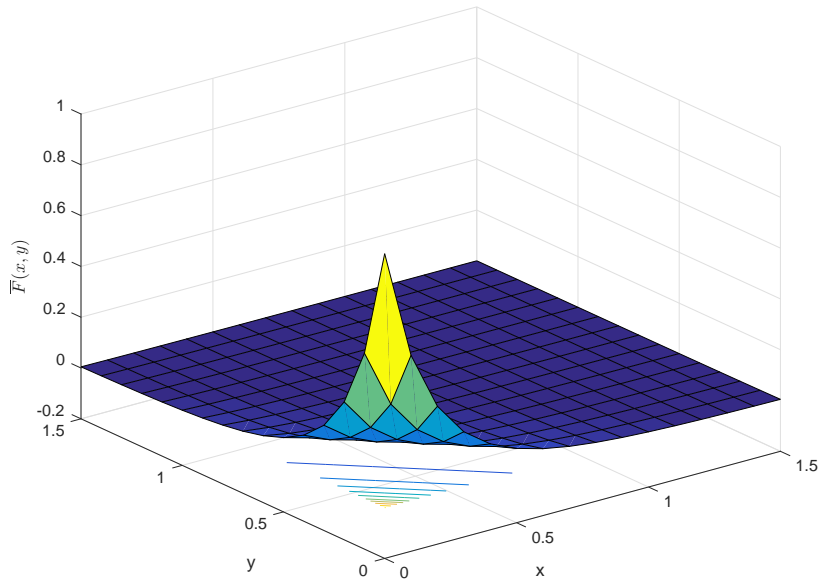}
\caption{De-cumulative distribution function (\ref{ddf-Arnold}).}
\label{Fig-Arnold}
\end{figure}
we depict this joint d.d.f.\, when $\mu_X=\mu_Y=0$, $\sigma_X=\sigma_Y=1$, and $\delta \approx 5.87$. Consequently, $\rho[X, Y] \approx 0.17$. The distribution is symmetric, because $X$ and $Y$ are identically distributed, and quite light-tailed, because $\delta\approx 5.87$ is a fairly large tail-index value.

The pair $(X, Y)$ with joint d.d.f.\, (\ref{ddf-Arnold}) enjoys a number of
attractive properties, which have arguably been a reason for the
popularity of this distribution. Among the properties is the stochastic representation
\begin{equation}
\label{Stoch-Arnold}
(X,Y)=_d \bigg ( \mu_X+\sigma_X \frac{E_X}{G}
,\mu_Y+\sigma_Y \frac{E_Y}{G} \bigg ),
\end{equation}
where ``$=_d$'' denotes equality in distribution, $E_X$ and $E_Y$ are independent exponential r.v.'s with unit rates, and
$G$ is an independent gamma-distributed r.v.\, with shape parameter $\delta>0$ and unit scale. Succinctly, we write $E_X\sim Exp(1)$, $E_Y\sim Exp(1)$ and $G\sim Ga(\delta, 1)$.

\begin{note}\rm
The two exponential random variables $E_X$ and $E_Y$ can be viewed as models of idiosyncratic risks, and the third random variable $G$ as the model for a background, or underlying, risk. Naturally, one may need to deal with more than two dimensions and with other distributions than the exponential and gamma, and all this is indeed possible (cf., e.g., Asimit et al., 2016; and references therein).
\end{note}

A very important for us property of this bivariate distribution is that its regression function is linear, that is, the equation $\mathbf{E}[X|\ Y]=\alpha+\beta Y$ holds, and the parameters are
\[
\alpha=\mu_X+\frac{\sigma_X}{\delta}\left(
1-\frac{\mu_Y}{\sigma_Y} \right)
\quad \textnormal{and} \quad
\beta=\frac{\sigma_X}{\delta\sigma_Y}.
\]
Consequently, Theorem \ref{th-el-1} allows us to calculate the weighted Gini correlation coefficient at a stroke:
\begin{equation}
C_w[X, Y]
= \frac{\sigma_X}{\delta\sigma_Y} \frac{\mathbf{Cov}[Y, w(1-F_Y(Y))]}{\mathbf{Cov}[X, w(1-F_X(X))]}
=\frac{1}{\delta},
\label{par-cw}
\end{equation}
where the right-most equation holds because the ratio of the covariances is equal to $\sigma_Y/\sigma_X$, which is a simple consequence of the fact that $(X-\mu_X)/\sigma_X$ and $(Y-\mu_Y)/\sigma_Y$ are identically distributed.

\begin{note}\rm
For the covariances in equation (\ref{par-cw}) to be finite, the means $\mathbf{E}[X]$ and $\mathbf{E}[Y]$ have to be finite, and this is true whenever $\delta>1$, which we assume. On the other hand, the Pearson correlation coefficient $\rho[X, Y]$ exists and is equal to $1/\delta $ only when $\delta>2$. Consequently, we can say that the weighted Gini correlation coefficient $C_w[X, Y]$ is an extension of the Pearson correlation coefficient to a wider class of random pairs $(X,Y)$, and in particular to those following d.d.f.\, (\ref{ddf-Arnold}) with the tail-index $\delta \in (1, \infty)$. This extension is useful because tail-index values $\delta \in (1, 2]$ have manifested in numerous real-life data sets: insurance, financial, and those related to income inequality (cf., e.g., Greselin et al., 2014; and references therein).
\end{note}

\section{Non-exchangeable linearly-regressed margins}
\label{sec-Par-SF1}

In some situations, exchangeability might be a drawback because identical tail indices of the margins $X$ and $Y$ might contradict empirical evidence. This suggests that the two idiosyncratic r.v.'s $E_X$ and $E_Y$ in  stochastic representation (\ref{Stoch-Arnold}) might be affected differently by the background r.v.\, $G$. To rectify this situation, we can proceed by introducing an auxiliary `background' random variable $G_Y\sim Ga(\delta_Y,1)$ with parameter $\delta_Y>0$, and require it to be independent of all the other random variables. That is, we let the pair $(X,Y)$ admit the stochastic representation
\begin{equation}
\label{Stoch-SF1}
(X,Y)=_d \bigg ( \mu_X+\sigma_X\frac{E_X}{G},
\mu_Y+\sigma_Y\frac{E_Y}{G_Y+G} \bigg ).
\end{equation}
with location parameters $\mu_X,\mu_Y\in\mathbf{R}$ and scale parameters  $\sigma_X,\sigma_Y>0$. The corresponding d.d.f.\, is (Su and Furman, 2016)
\begin{equation}
\label{ddf-SF1}
\overline{F}_{X,Y}(x,y)
=\left(1+\frac{x-\mu_X}{\sigma_X}+\frac{y-\mu_Y}{\sigma_Y}\right)^{-\delta}
\left(1+\frac{y-\mu_Y}{\sigma_Y}\right)^{-\delta_Y}
\end{equation}
for all $x>\mu_X$ and $ y>\mu_Y$, where $\delta>0$ and $\delta_Y>0$ are tail indices. Obviously, $(X-\mu_X)/\sigma_X$ is Pareto of the 2nd kind with the tail index $\delta$, and $(Y-\mu_Y)/\sigma_Y$ is Pareto of the 2nd kind with the tail index $\delta_Y^\ast=\delta+\delta_Y$. In Figure \ref{Fig-Par-ddf-SF1}
\begin{figure}[h!]
\centering
\includegraphics[width=12cm]{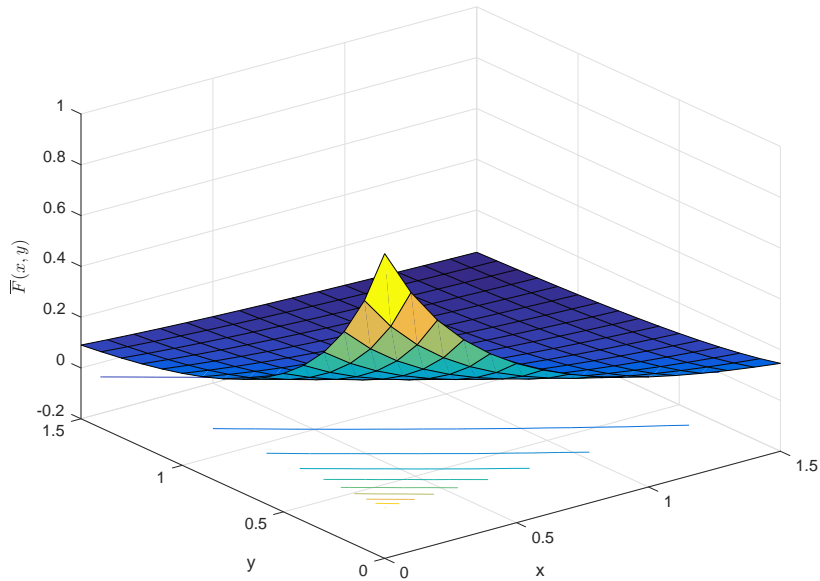}
\caption{De-cumulative distribution function (\ref{ddf-SF1}).}
\label{Fig-Par-ddf-SF1}
\end{figure}
we depict this joint d.d.f.\, when $\mu_X=\mu_Y=0$, $\sigma_X=\sigma_Y=1$, $\delta=2.1$ and $\delta_Y^\ast=2.6254$. As a result of these choices, we have $\rho[X, Y] \approx 0.17$, which is an approximately the same Pearson correlation value as in the previous section. Since the tail indices are different, the joint distribution is not symmetric. Note also that the current distribution is more heavy-tailed than that depicted in Figure \ref{Fig-Arnold}.

Remarkably, despite being an asymmetric distribution, it nevertheless has a linear regression function (Su and Furman, 2016, Theorem 2.3). Indeed, the equation $\mathbf{E}[X|\ Y]=\alpha+\beta Y$ holds with the parameters
\begin{equation}
\alpha=\frac{\sigma_X(\delta_Y^*-1)}{\delta_Y^*(\delta-1)}
\quad \textnormal{and} \quad
\beta
=\frac{\sigma_X(\delta_Y^*-1)}{\sigma_Y\delta_Y^*(\delta-1)}.
\label{sf-parameters}
\end{equation}
Hence, only with a slightly more complex stochastic representation than that in equation (\ref{Stoch-Arnold}), we have succeeded in departing from the symmetry of margins $X$ and $Y$ but preserved the linearity of their regression function. Consequently, we can still enjoy the computational tractability of the weighted Gini correlation coefficient. Namely, Theorem \ref{th-el-1} implies the equation
\begin{equation}
\label{par-fs-00}
C_w[X, Y]=\frac{\sigma_X(\delta_Y^*-1)}{\sigma_Y\delta_Y^*(\delta-1)}
\frac{\mathbf{Cov}[Y,w(1-F_Y(Y))]}{\mathbf{Cov}[X, w(1-F_X(X))]},
\end{equation}
and so we only need to calculate the two covariances on the right-hand side of equation (\ref{par-fs-00}). We note at the outset that, unlike in the previous section, we are now dealing with the case when the standardized covariances do not cancel out, because $(X-\mu_X)/\sigma_X$ and $(Y-\mu_Y)/\sigma_Y$ have different tail indices. Hence, some additional effort is required.

To illustrate, let $w(t)=t^{\gamma }$, in which case $C_w[X, Y]$ reduces to the extended Gini correlation coefficient $\Gamma_\gamma[X, Y]$. With technicalities relegated to Appendix \ref{appendix}, we have the formula
\begin{equation}
\Gamma_\gamma[X, Y]
={1\over \delta} \times \frac{\delta(\gamma+1)-1}{\delta_Y^*(\gamma+1)-1}
\label{par-fs-0}
\end{equation}
whenever $\delta>1$ and $\delta_Y>0$. For comparison, the Pearson correlation coefficient is (Su and Furman, 2016, Proposition 2.2 and Corollary 3.2)
\begin{equation}
\label{Pearscon-SF1}
\rho[X,Y]=\sqrt{\frac{\delta-2}{\delta\delta_Y^*
(\delta_Y^*-2)}}
\end{equation}
whenever $\delta>2 $ and $\delta_Y>0$. Thus, in general, $\Gamma_\gamma[X, Y]$ and $\rho[X,Y]$ do not coincide. We see from equations (\ref{par-fs-0}) and (\ref{Pearscon-SF1}) that the extended Gini correlation coefficient is a decreasing function of $\delta$, whereas the Pearson correlation coefficient is not. We also see this phenomenon in Figure \ref{Fig-Gini-Pearson-SF1},
\begin{figure}[h!]
\centering
\includegraphics[width=12cm]{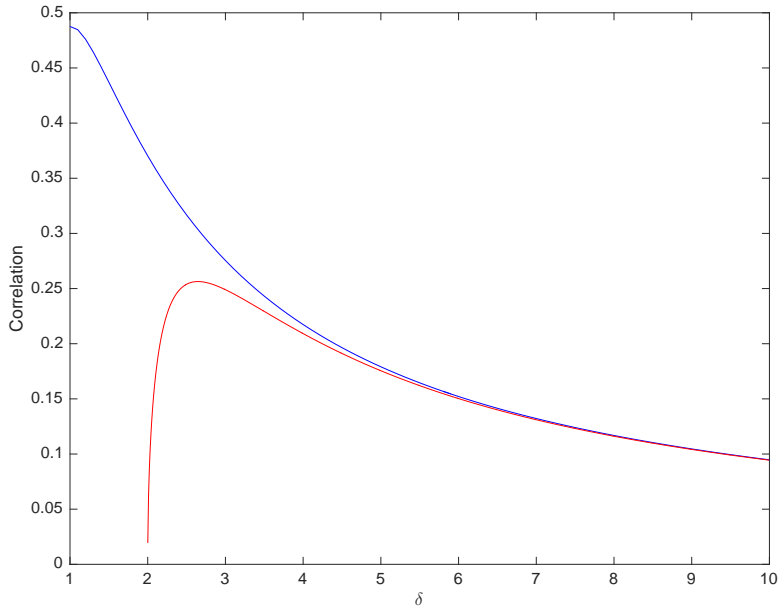}
\caption{Extended Gini (blue curve, decreasing) and Pearson (red curve, non-monotonic) correlation coefficients as functions of $\delta$.}
\label{Fig-Gini-Pearson-SF1}
\end{figure}
where $\mu_X=\mu_Y=0$, $\sigma_X=\sigma_Y=1$, and $\delta_Y=0.5254$.

Our next illustration of Theorem \ref{th-el-1} concerns the possibly $S$-shaped weight function $w(t)=w_{a,b}(t)$, where
\[
w_{a,b}(t)=\frac{1}{B(a,b)}\int_0^t s^{a-1}(1-s)^{b-1}ds
\]
for all $t\in (0, 1)$, with parameters $a>0$ and $b>0$, whose values determine whether the weight function is convex, concave, or $S$-shaped. Obviously, when $b=1$, then $w_{a,b}(t)$ reduces to the power function $t^{a}$ that leads to the extended Gini correlation coefficient $\Gamma_\gamma[X, Y]$ with $\gamma=a$. (Certainly, the function $w_{a,b}$ is known in Calculus as the regularized incomplete beta function, whereas in Statistics it is known as the beta c.d.f.)

Just like in the (convex or concave) case $w(t)=t^{\gamma }$ considered earlier, we can now equally successfully employ Theorem \ref{th-el-1} and concentrate on calculating the two covariances on the right-hand side of equation (\ref{par-fs-00}) with $w(t)=w_{a,b}(t)$. With the technical details relegated to Appendix \ref{appendix}, we obtain the equation
\begin{equation}
C_{w_{a,b}}[X, Y]
={1\over \delta} \times {\displaystyle
1-\frac{B\left(a +1-1/\delta_Y^*,b\right)}{B(a,b)}
-\frac{b}{a+b}
\over\displaystyle
1-\frac{B\left(a+1-1/\delta,b\right)}{B(a,b)}
- {b\over a+b}
} .
\label{par-fs-2}
\end{equation}
When $b=1$, then the right-hand side of equation (\ref{par-fs-2}) reduces to that of equation (\ref{par-fs-0}), which is natural because $w_{a,b}(t)$ is the power function $t^a$ when $b=1$.

\begin{note}\rm
One may naturally wonder whether formulas like (\ref{par-fs-0})--(\ref{par-fs-2}) are of practical value, given theoretical challenges when deriving them. The answer is definitely in affirmative because parametric statistical inference crucially hinges on such formulas.
\end{note}

\section{Non-exchangeable nonlinearly-regressed margins}
\label{sec-Par-SF2}

Here we discuss yet another useful Pareto-type bivariate distribution. We note at the outset that it does not have a linear regression function and thus Theorem \ref{th-el-1} cannot be applied. Namely, let the pair $(X,Y)$ admit the stochastic representation
\begin{equation}
\label{Stoch-SF2}
(X,Y)=_d \bigg ( \mu_X+\sigma_X\frac{E_X}{G_X+G},
\mu_Y+\sigma_Y\frac{E_Y}{G_Y+G} \bigg ),
\end{equation}
where  $E_X$ and $E_Y$ are two independent exponential r.v.'s  with unit scales, and $G\sim Ga(\delta, 1)$, $G_X\sim Ga(\delta_X,1)$ and
$G_Y\sim Ga(\delta_Y,1)$ are three independent gamma r.v.'s, which are also independent of $E_X$ and $E_Y$. Hence, $\mu_X$ and $\mu_Y\in\mathbf{R}$ are location parameters, $\sigma_X$ and $\sigma_Y>0$ are scale parameters, and
$\delta$, $\delta_X$ and $\delta_Y>0$ are tail indices. The joint d.d.f.\, of the random pair $(X, Y)$ is (Su and Furman, 2016)
\begin{equation}
\label{Par-ddf-SF2}
\overline{F}_{X,Y}(x,y)
=\left(1+\frac{x-\mu_X}{\sigma_X}+\frac{y-\mu_Y}{\sigma_Y}\right)^{-\delta}\left(1+\frac{x-\mu_X}{\sigma_X}\right)^{-\delta_X}
\left(1+\frac{y-\mu_Y}{\sigma_Y}\right)^{-\delta_Y}
\end{equation}
for all $x>\mu_X$ and $y>\mu_Y$. We see from the d.d.f.\, that $(X-\mu_X)/\sigma_X$ is Pareto of the 2nd kind with the tail index $\delta_X^\ast=\delta+\delta_X$, and $(Y-\mu_Y)/\sigma_Y$ is Pareto of the 2nd kind with the tail index $\delta_Y^\ast=\delta+\delta_Y$. In Figure \ref{Fig-Par-ddf-SF2}
\begin{figure}[h!]
\centering
\includegraphics[width=12cm]{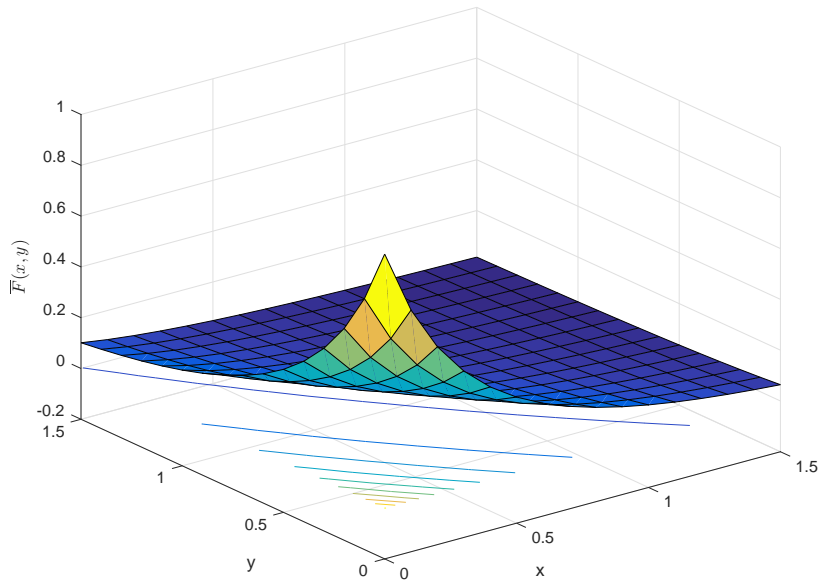}
\caption{De-cumulative distribution function (\ref{Par-ddf-SF2}).}
\label{Fig-Par-ddf-SF2}
\end{figure}
we depict this joint d.d.f.\, when $\mu_X=\mu_Y=0$, $\sigma_X=\sigma_Y=1$, $\delta_X^\ast=3$ and $\delta_Y^\ast=2.5$. Again, we have chosen the parameters so that $\rho[X, Y] \approx 0.17$, which is an approximately the same Pearson correlation value as in the previous sections. The joint distribution is not symmetric due to the different values of $\delta_X^\ast$ and $\delta_Y^\ast$, and it is more heavily-tailed than that depicted in Figure \ref{Fig-Arnold}.

Unlike in the previous sections, the current bivariate Pareto distribution does not have a linear regression function (Su and Furman, 2016, Theorem 3.3), and so we cannot rely on Theorem \ref{th-el-1} to easily calculate Gini-type correlation coefficients. Nevertheless, we can still compute them directly and get closed-form formulas, but the task is considerably more involved.

As an example, consider the extended Gini correlation coefficient  $\Gamma_\gamma[X,Y]$. We need the $(q+1)\times q$ hypergeometric function (Gradshteyn and Ryzhik, 2007)
\begin{eqnarray}
\label{3F2}
_{q+1}F_q(a_1,\ldots,a_{q+1};b_1,\ldots,b_q;z):=\sum_{k=0}^{\infty}\frac{(a_1)_k,\ldots,(a_{q+1})_k }{(b_1)_k,\ldots,(b_q)_k}\frac{z^k}{k!},
\end{eqnarray}
where $q\ge 0$ is a non-negative integer and $(a)_k$ is the Pochhammer symbol. When $a_1,\ldots,a_{q+1}$ are positive, and this is the case upon which we rely, then the radius of convergence of the series is the open disk $|z|<1$ of the complex plane. On the boundary $|z|=1$ of the disc, the series
converges absolutely when $h:=b_1+\cdots + b_q-a_1-\cdots -a_{q+1}>0$, and converges except at $z=1$ when $-1< h \le 0$.

Let $\boldsymbol{i}=(i_1,i_2,i_3)$ be a non-negative integer-valued
triplet such that $i_1+i_2+i_3=2$, and let $\mathcal{I}$ denote the set of all such triplets. We prove (details in Appendix \ref{appendix}) that for $\gamma>0$, $\delta_{X,1,3}^\ast=\delta_X^\ast+i_1+i_3$, and appropriately
defined constants $d_{\boldsymbol{i}}$, the extended Gini correlation coefficient is
\begin{align}
&\Gamma_\gamma[X, Y]
=\frac{\delta_X^\ast(\gamma+1)+1}{\delta_X^\ast\gamma}
\notag
\\
&-\frac{1}{\delta_X^\ast\gamma}
\sum_{ \boldsymbol{i}\in \mathcal{I}}
d_{\boldsymbol{i}}
\frac{(\delta_X^\ast-1)(\gamma+1)(\delta_X^\ast(\gamma+1)-1){}_3F_2(\delta+i_3,2,1;\delta^\ast_{X,1,3},(\gamma+1)\delta_Y^\ast+i_2+i_3;1)}{(\delta^\ast_{X,1,3}-2) (\delta^\ast_{X,1,3}-1)
 ((\gamma+1)\delta_Y^\ast+i_2+i_3-1) }.
\label{gamma-10}
\end{align}
We admit that it is a cumbersome formula, but it is a natural one because it contains, as special cases, earlier derived formulas (\ref{par-cw}) and (\ref{par-fs-0}). To demonstrate, consider first d.d.f.\, (\ref{ddf-Arnold}), in which case we let $\delta_X\downarrow 0$ and $\delta_Y\downarrow 0$, then set
\[
d_{{\boldsymbol{i}}}=\left\{
\begin{array}{ll}
1 & \textrm{when} \quad  {\boldsymbol{i}}=(0,0,2), \\
0 & \textnormal{otherwise},
\end{array}
\right.
\]
and finally use equation (\ref{3F2}) to obtain
\begin{align*}
{}_3F_2(\delta+i_3,2,1;\delta^\ast_{X,1,3},(\gamma+1)\delta_Y^\ast+i_2+i_3;1)
&={}_2F_1(2,1;(\gamma+1)\delta+2;1)
\\
&=\frac{\delta(\gamma+1)+1}{\delta(\gamma+1)-1}.
\end{align*}
With these facts, formula (\ref{par-cw}) follows easily. In a similar fashion, consider d.d.f.\, (\ref{ddf-SF1}), in which case we let $\delta_X\downarrow 0$, then set
\[
d_{{\boldsymbol{i}}}=\left\{
\begin{array}{ll}
1 & \textrm{when} \quad {\boldsymbol{i}}=(0,0,2) \textnormal{ or } (0,1,1),
\\
0 & \textnormal{otherwise},
\end{array}
\right.
\]
and arrive at formula (\ref{par-fs-0}).

\begin{note}\rm
Reflecting upon the bivariate distribution considered in this section, we see that the extended and, more generally, weighted Gini correlations can be calculated in cases that do not have linear regression functions. Indeed, a careful analysis of the proof of Theorem \ref{th-el-1} shows that it is possible to accommodate non-linear regression functions as well, but the right-hand side of equation (\ref{eq-0c}) changes, which is of course natural.
\end{note}

\section*{Acknowledgements}

This work is a part of our research under the grant ``The Capital Asset Pricing Model: an Insurance Variant'' awarded by the Casualty Actuarial Society (CAS).

\appendix
\section{Appendix: proofs}
\label{appendix}

Recall the notation $w^*(t)=1-w(1-t)$, which we use throughout the appendix. 

\begin{proof}[Proof of Theorem \ref{th-1}]
First, we rewrite the weighted correlation coefficient as follows:
\begin{equation}\label{Cor-w-1}
C_w[X,Y]=\frac{\mathbf{Cov}[X,w^*(V)]}{\mathbf{Cov}[X, w^*(U)]},
\end{equation}
where $V=F_Y(Y)$, and $U=F_X(X)$. Using Cuadras's (2002) generalization of Hoeffding's covariance representation, we have
\begin{equation}\label{Cor-w-2}
\mathbf{Cov}[X,w^*(V)]=\iint \Big ( \mathbf{P}[X\le x, V\le t]-F_X(x)t \Big ) dxdw^*(t)
\end{equation}
because $F_V(t)=t$ due to the fact that $V$ is distributed uniformly on the interval $[0,1]$. The upper Fr\'{e}chet bound for the joint probability $\mathbf{P}[X\le x, V\le t]$ is $\min\{F_X(x),t\}$, which is equal to $\mathbf{P}[X\le x, U\le t]$ because $U=F_X(X)$. Thus
\[
\mathbf{P}[X\le x, V\le t]-F_X(x)t \le \mathbf{P}[X\le x, U\le t]-F_X(x)F_U(t)
\]
because $F_U(t)=t$ due to the fact that $U$ is distributed uniformly on the interval $[0,1]$. Consequently, $\mathbf{Cov}[X,w^*(V)]\le \mathbf{Cov}[X, w^*(U)]$, which proves $C_w[X,Y]\le 1$.

To prove $C_w[X,Y]\ge -\lambda_w[X]$, we again start with equation (\ref{Cor-w-2}) but this time use the lower Fr\'{e}chet bound for $\mathbf{P}[X\le x, V\le t]$, which is $\max\{F_X(x)+t-1,0\}$. Note that $\max\{F_X(x)+t-1,0\}-F_X(x)t=-(\min\{F_X(x),1-t\}-F_X(x)(1-t))$. Since $U=F_X(X)$, we thus have
\begin{align*}
\mathbf{P}[X\le x, V\le t]-F_X(x)t
& \ge -\Big ( \mathbf{P}[X\le x, U\le 1-t]-F_X(x)(1-t) \Big )
\\
&=\mathbf{P}[X\le x, 1-U\le t]-F_X(x)t
\\
&=\mathbf{P}[X\le x, 1-U\le t]-F_X(x)F_{1-U}(t) .
\end{align*}
Consequently, $\mathbf{Cov}[X,w^*(V)]\ge \mathbf{Cov}[X, w^*(1-U)]$, which proves the bound
\[
C_w[X,Y]
\ge \frac{\mathbf{Cov}[X,w^*(1-U)]}{\mathbf{Cov}[X, w^*(U)]}
\]
whose right-hand side is equal to $-\lambda_w[X]$. This completes the proof of Theorem \ref{th-1}.
\end{proof}

\begin{proof}[Proof of Theorem \ref{th-2}]
Part (\ref{part-a}) is obvious. To prove part (\ref{part-b}), we rewrite the weighted correlation coefficient as follows:
\begin{equation}\label{Cor-w-1o}
C_w[X,Y]=\frac{\mathbf{Cov}[X,w^*\circ F_Y(Y)]}{\mathbf{Cov}[X, w^*\circ F_X(X)]}. 
\end{equation}
Since the function $w^*\circ F_X$ is non-decreasing, we know from Lehmann (1966) that $\mathbf{Cov}[X, w^*\circ F_X(X)]\ge 0$, and so we only need to show that $\mathbf{Cov}[X,w^*\circ F_Y(Y)]\ge 0$ whenever $X$ and $Y$ are PQD. Using Cuadras's (2002) generalization of Hoeffding's covariance representation, we have
\begin{equation}\label{Cor-w-2o}
\mathbf{Cov}[X,w^*\circ F_Y(Y)]=\iint \Big ( \mathbf{P}[X\le x, Y\le y]-F_X(x)F_Y(y) \Big ) dxdw^*\circ F_Y(y).
\end{equation}
Since the function $w^*\circ F_Y$ is non-decreasing, the integral is non-negative whenever the integrand is non-negative, which is so whenever $X$ and $Y$ are PQD, that is, $\mathbf{P}[X\le x, Y\le y]\ge F_X(x)F_Y(y)$ for all $x$ and $y$. This proves part (\ref{part-b}). The proof of part (\ref{part-c}) is analogous. This concludes the proof of Theorem \ref{th-2}.
\end{proof}

\begin{proof}[Proof of Theorem \ref{th-3}]
We start with the elementary observation that
\begin{equation}\label{wq-1}
C_w[X,Y]=C_w[X_{a,b},Y_{c,d}].
\end{equation}
Next we rewrite the right-hand side of equation (\ref{wq-1}) in the form of a ratio analogous to that on the right-hand side of equation (\ref{Cor-w-1o}). Then, for both the numerator and the denominator, we apply Cuadras's (2002) generalization of Hoeffding's covariance representation, analogously to equation (\ref{Cor-w-2o}). In this way, we express $C_w[X_{a,b},Y_{c,d}]$ in terms of the bivariate cdf of $(X_{a,b},Y_{c,d})$ and the marginal c.d.f.'s of $X_{a,b}$ and $Y_{c,d}$. The interchangeability assumption implies that the latter three c.d.f.'s are equal to those of  $(Y_{c,d},X_{a,b})$, $Y_{c,d}$, and $X_{a,b}$, respectively, thus proving that  $C_w[X_{a,b},Y_{c,d}]$ is equal to $C_w[Y_{c,d},X_{a,b}]$, which is obviously equal to $C_w[Y,X]$. This concludes the proof of Theorem \ref{th-3}.
\end{proof}

\begin{proof}[Proof of Theorem \ref{th-4}]
Since $C_w[a+bX,h(Y)]=C_w[X,h(Y)]$, we only need to show that $C_w[X,h(Y)]=C_w[X,Y]$, which follows from the fact that $F_{h(Y)}(h(y))=F_Y(y)$. This concludes the proof of Theorem \ref{th-4}.
\end{proof}

Proofs of Theorems \ref{th-5} and \ref{th-el-1} rely on the following lemma.

\begin{lemma}\label{lemma-el-1}
When there are constants $\alpha, \beta \in \mathbf{R}$ such that
$\mathbf{E}[X \mid Y]=\alpha+\beta Y$, then the equation
\begin{equation}
\label{eq-0cc}
\mathbf{Cov}[X,w(1-F_Y(Y))]=\beta \mathbf{Cov}[Y, w(1-F_Y(Y))].
\end{equation}
holds for every $w\in \mathcal{W}$.
\end{lemma}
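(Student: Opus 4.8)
The plan is to exploit the fact that $w(1-F_Y(Y))$ is a bounded function of $Y$ alone, so that the conditional-expectation hypothesis can be inserted directly via the tower property. Write $g(Y):=w(1-F_Y(Y))$; since $w$ maps into $[0,1]$, the random variable $g(Y)$ takes values in $[0,1]$ and is in particular bounded, which guarantees that all the cross moments and covariances appearing below are finite as soon as $\mathbf{E}[|X|]$ and $\mathbf{E}[|Y|]$ are finite.

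First I would expand the left-hand covariance as $\mathbf{Cov}[X,g(Y)]=\mathbf{E}[Xg(Y)]-\mathbf{E}[X]\,\mathbf{E}[g(Y)]$. The key step is to evaluate the cross moment $\mathbf{E}[Xg(Y)]$ by conditioning on $Y$: since $g(Y)$ is measurable with respect to $\sigma(Y)$, the tower property gives $\mathbf{E}[Xg(Y)]=\mathbf{E}\big[g(Y)\,\mathbf{E}[X\mid Y]\big]$, and substituting the hypothesis $\mathbf{E}[X\mid Y]=\alpha+\beta Y$ turns this into $\alpha\,\mathbf{E}[g(Y)]+\beta\,\mathbf{E}[Yg(Y)]$.

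Next I would treat $\mathbf{E}[X]$ in the same way: taking expectations in $\mathbf{E}[X\mid Y]=\alpha+\beta Y$ yields $\mathbf{E}[X]=\alpha+\beta\,\mathbf{E}[Y]$. Substituting both identities back into the expanded covariance, the intercept $\alpha$ cancels and the remaining terms collapse to $\beta\big(\mathbf{E}[Yg(Y)]-\mathbf{E}[Y]\,\mathbf{E}[g(Y)]\big)=\beta\,\mathbf{Cov}[Y,g(Y)]$, which is precisely the asserted equation (\ref{eq-0cc}).

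There is essentially no genuine obstacle here; the only point requiring a word of care is integrability, and that is handled by the boundedness of $g(Y)\in[0,1]$ together with the standing assumption that the covariances entering the definition of $C_w$ are well-defined and finite (which in turn forces $\mathbf{E}[|X|]<\infty$ and $\mathbf{E}[|Y|]<\infty$). The linearity of the regression function is used in exactly one place — the substitution of $\alpha+\beta Y$ for $\mathbf{E}[X\mid Y]$ — and it is this single substitution that produces the slope $\beta$ as the multiplicative constant linking the two covariances.
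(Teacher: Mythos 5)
Your proof is correct and takes essentially the same route as the paper's: both expand the covariance, apply the tower property to replace $\mathbf{E}[Xw(1-F_Y(Y))]$ by $\mathbf{E}[(\alpha+\beta Y)w(1-F_Y(Y))]$, use $\mathbf{E}[X]=\alpha+\beta\mathbf{E}[Y]$ to cancel the intercept, and collapse to $\beta\,\mathbf{Cov}[Y,w(1-F_Y(Y))]$. Your explicit remark on integrability (boundedness of $w(1-F_Y(Y))\in[0,1]$ plus finiteness of first moments) is a minor refinement the paper leaves implicit.
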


\begin{proof}
We have
\begin{align*}
\mathbf{Cov}[X,w(1-F_Y(Y))]
& =\mathbf{E}[\mathbf{E}[X \mid Y]w(1-F_Y(Y))] -\mathbf{E}[X]\mathbf{E}[w(1-F_Y(Y))]
\\
&=\beta \mathbf{E}[Yw(1-F_Y(Y))] +(\alpha -\mathbf{E}[X])\mathbf{E}[w(1-F_Y(Y))]
\\
&=\beta \mathbf{E}[Yw(1-F_Y(Y))] -\beta \mathbf{E}[Y]\mathbf{E}[w(1-F_Y(Y))]
\\
&=\beta \mathbf{Cov}[Y, w(1-F_Y(Y))],
\end{align*}
which establishes equation (\ref{eq-0cc}).
\end{proof}

\begin{proof}[Proof of Theorem \ref{th-5}] 
We start with the equations 
\begin{align}
\Pi_{G,w}[X, Y]-\mathbf{E}[X]
&=\alpha -\mathbf{E}[X]+\beta \mathbf{E}[Y]
+\beta 
\left(
\pi_{G,w}[Y]-\mathbf{E}[Y]
\right)
\notag 
\\
&= \beta
\left(
\pi_{G,w}[Y]-\mathbf{E}[Y]
\right),
\label{qqq-0}
\end{align}
that follow from the definitions of $\Pi_{G,w}[X, Y]$ and $\pi_{G,w}[Y]$ combined with simple algebra, and the elementary equation $\mathbf{E}[X]=\alpha +\beta \mathbf{E}[Y]$. It now suffices to note that, in view of equation (\ref{eq-0cc}), we have 
\begin{align*}
\beta
&={\mathbf{Cov}[X,w(1-F_Y(Y))]\over  \mathbf{Cov}[Y, w(1-F_Y(Y))]}
\\
&=C_w[X, Y]
\frac{\mathbf{Cov}[X,w(1-F_X(X))]}{\mathbf{Cov}[Y, w(1-F_Y(Y))]}
\end{align*}
which establishes equation (\ref{eq-WIPM-gini}) and concludes the proof of Theorem \ref{th-5}.
\end{proof}

\begin{proof}[Proof of Theorem \ref{th-el-1}]
Equations (\ref{eq-0c}) and (\ref{eq-0cc}) are equivalent, and thus the equation $\mathbf{E}[X|\ Y]=\alpha+\beta Y$ holds. To prove the second (i.e., converse) part of Theorem \ref{th-el-1}, we start with the assumption that equation (\ref{eq-0cc}) holds for every $w\in \mathcal{W}$. With the notation $Z=X-\beta Y$, equation (\ref{eq-0cc}) becomes equivalent to
\begin{equation}
\label{eq-0cd}
\mathbf{E}\big [(Z-\mathbf{E}[Z])w(1-F_Y(Y))\big ]=0,
\end{equation}
which by assumption must hold for all the weight functions $w\in \mathcal{W}$, and in particular for those that safisfy $w(t)>0$ for all $t\in (0,1)$. What remains to be shown, therefore, is that the validity of equation (\ref{eq-0cd}) for all $w\in \mathcal{W}$ implies the validity of 
\begin{equation}
\label{eq-0cde}
Z-\mathbf{E}[Z]=0 \quad \textrm{almost surely}. 
\end{equation}
Indeed, statement (\ref{eq-0cde}) is equivalent to
$X-\beta Y-\mathbf{E}[X]+\beta \mathbf{E}[Y]=0$, and the latter equation obviously  implies $\mathbf{E}[X|\ Y]=\alpha+\beta Y$ with $\alpha =\mathbf{E}[X]-\beta \mathbf{E}[Y]$, thus concluding the proof of the converse of Theorem \ref{th-el-1}.

To verify that statement (\ref{eq-0cde}) holds, we proceed by contradiction, that is, we assume that statement (\ref{eq-0cde}) does not hold. Then there is a subset $A$ of the underlying sample space $\Omega $ such that $\mathbf{P}(A)>0$ and $Z(\omega)-\mathbf{E}[Z] \ne 0 $ for all $\omega \in A$. We can refine this statement by decomposing  $A= A^{-} \cup A^{+}$ with either $A^{-} $ or $ A^{+}$ or both having (strictly) positive probabilities and such that $Z(\omega)-\mathbf{E}[Z]<0 $ for all $\omega \in A^{-}$ and $Z(\omega)-\mathbf{E}[Z]>0 $ for all $\omega \in A^{+}$. Assume for concreteness that   $\mathbf{P}(A^{+})>0$, with the case  $\mathbf{P}(A^{-})>0$ analyzed analogously. We arrive at the required contradiction if $w(1-F_Y(Y(\omega )))>0$ for $\mathbf{P}$-almost all $\omega \in A^{+}$. The latter holds if $F_Y(Y(\omega )) \in (0,1)$ because we deal with the subclass of weight functions such that $w(t)>0$ for all $t\in (0,1)$. Statement $F_Y(Y(\omega )) \in (0,1)$  holds for $\mathbf{P}$-almost all $\omega \in \Omega $ because the c.d.f.\, $F_Y$ is continuous by assumption. This concludes the proof of Theorem \ref{th-el-1}.
\end{proof}

\begin{proof}[Proof of equation (\ref{par-fs-0})]
We need to calculate the two covariances on the right-hand side of equation (\ref{eq-0c}). We start with the denominator and have
\begin{align*}
\mathbf{E}[(X-\mu_X)(1-F_X(X))^\gamma]
&=\sigma_X \int_0^\infty  x(1+x)^{\delta\gamma}\delta(1+x)^{-(\delta+1)}dx
\\
&=\frac{\sigma_X}{(\gamma+1)(\delta(\gamma+1)-1)} .
\end{align*}
Setting $\gamma =0$, we have $\mathbf{E}[X-\mu_X]=\sigma_X/(\delta-1)$. Furthermore, $\mathbf{E}[(1-F_X(X))^\gamma]=1/(\gamma+1)$. From these formulas, we obtain
\begin{equation}
\label{cov-xx}
\mathbf{Cov}[X,(1-F_X(X))^\gamma]
=-{\gamma \over \gamma +1}\sigma_X\frac{\delta}{(\delta-1)
(\delta(\gamma+1)-1)} .
\end{equation}
To calculate the covariance $\mathbf{Cov}[Y, (1-F_Y(Y))^\gamma]$, we recall that $(Y-\mu_Y)/\sigma_Y$ is
Pareto of the 2nd kind with the tail index
 $\delta_Y^\ast=\delta+\delta_Y$. Hence, by changing $\alpha $ into $\alpha+\alpha_Y$ and also $\sigma_X$ into $\sigma_Y$ on the right-hand side of equation (\ref{cov-xx}), we arrive at
\begin{equation}
\label{cov-yy}
\mathbf{Cov}[Y,(1-F_Y(Y))^\gamma]
=-{\gamma \over \gamma +1}\sigma_Y\frac{\delta_Y^\ast}{(\delta_Y^\ast-1)
((\delta_Y^\ast)(\gamma+1)-1)} .
\end{equation}
Using equations (\ref{cov-xx}) and (\ref{cov-yy}) on the right-hand side of equation (\ref{eq-0c}), we obtain equation (\ref{par-fs-0}).
\end{proof}

\begin{proof}[Proof of equation (\ref{par-fs-2})]
With the notation $Z=(X-\mu_X)/\sigma_X$ and the fact that $F_Z^{-1}(u)=(1-u)^{-1/\delta}-1$, we have
\begin{align*}
\mathbf{E}[(X-\mu_X)w_{a,b}(1-F_X(X))]
&=
\sigma_X\mathbf{E}[F_Z^{-1}(F_Z(Z))w_{a,b}(1-F_Z(Z))]
\\
&=\sigma_X\int_0^1 (u^{-1/\delta}-1) w_{a,b}(u) du
\\
&=\sigma_X\bigg (\int_0^1 u^{-1/\delta} w_{a,b}(u) du-\frac{b}{a+b}\bigg ).
\end{align*}
Furthermore,
\begin{align*}
\int_0^1 u^{-1/\delta} w_{a,b}(u) du
&=\int_0^1  u^{-1/\delta}
\int_0^u \frac{t^{a-1}(1-t)^{b-1}}{B(a,b)}dtdu
\\
&=\frac{\delta}{\delta-1}\int_0^1
\frac{t^{a-1}(1-t)^{b-1}}{B(a,b)}
\left(1-t^{(\delta-1)/\delta}\right)dt
\\
&=\frac{\delta}{\delta-1}\left(
1-\frac{B\left(a+(\delta-1)/\delta,b\right)}{B(a,b)} \right).
\end{align*}
Hence, with the formulas $\mathbf{E}[X-\mu_X]=\sigma_X/(\delta-1)$ and $\mathbf{E}[w_{a,b}(1-F_X(X))]=b/(a+b)$, we obtain
\begin{equation}
\label{cov-xx-2}
\mathbf{Cov}[X,w_{a,b}(1-F_X(X))]
=\frac{\sigma_X\delta}{\delta-1}\left(
1-\frac{B\left(a+1-1/\delta,b\right)}{B(a,b)}
- \frac{ b}{a+b}\right).
\end{equation}
(Note that when $b=1$, then equation (\ref{cov-xx-2}) reduces to equation (\ref{cov-xx}).) Changing $\delta $ into $\delta+\delta_Y$ and $\sigma_X$ into $\sigma_Y$ on the right-hand side of equation (\ref{cov-xx-2}), we arrive at
\begin{equation}
\label{cov-yy-2}
\mathbf{Cov}[Y,w_{a,b}(1-F_Y(Y))]
=\frac{\sigma_Y(\delta+\delta_Y)}{\delta+\delta_Y-1}\left(
1-
\frac{B\left(a+1-1/(\delta+\delta_Y),b\right)}{B(a,b)}
-\frac{b}{a+b}\right).
\end{equation}
Using covariance formulas (\ref{cov-xx-2}) and (\ref{cov-yy-2}) on the right-hand side of equation (\ref{par-fs-00}), we obtain equation (\ref{par-fs-2}).
\end{proof}

\begin{proof}[Proof of equation (\ref{gamma-10})]
The bivariate p.d.f.\, of $(X-\mu_X)/\sigma_X$ and $(Y-\mu_Y)/\sigma_Y$ is given by
\begin{equation}\label{pdf-third}
p(x,y;\delta_X,\delta_Y,\delta)
  = \sum_{ \boldsymbol{i}\in \mathcal{I}}
 d_{\boldsymbol{i}}
(1+x)^{-(\delta_X+i_1)}(1+y)^{-(\delta_Y+i_2)}
(1+x+y)^{-(\delta+i_3)},
\end{equation}
where $\mathcal{I}$ is the set of all non-negative and integer-valued triplets $\boldsymbol{i}=(i_1,i_2,i_3)$ such that
$i_1+i_2+i_3=2$. We have
\begin{align*}
{1\over \sigma_X}&\mathbf{E}[(X-\mu_X)(1-F_Y(Y))^\gamma]
\\
&=\sum_{ \boldsymbol{i}\in \mathcal{I}}
d_{\boldsymbol{i}}
 \int_0^\infty \int_0^\infty x(1+y)^{\delta_Y^\ast\gamma} (1+x)^{-(\delta_X+i_1)}(1+y)^{-(\delta_Y+i_2)}
(1+x+y)^{-(\delta+i_3)}dxdy \\
&=\sum_{ \boldsymbol{i}\in \mathcal{I}}
d_{\boldsymbol{i}}
 \int_0^\infty (1+y)^{-((\gamma+1)(\delta_Y+\delta)+i_2+i_3)}  \int_0^\infty x
 (1+x)^{-(\delta_X+i_1)}
\left(1+\frac{x}{1+y}\right)^{-(\delta+i_3)}dxdy.
\end{align*}
We continue with the help of the hypergeometric function noted in Section \ref{sec-Par-SF2} and have
\begin{align*}
{1\over \sigma_X}&\mathbf{E}[(X-\mu_X)(1-F_Y(Y))^\gamma]
\\
&=
\sum_{ \boldsymbol{i}\in \mathcal{I}}
d_{\boldsymbol{i}}
 \int_0^\infty
\frac{(1+y)^{-((\gamma+1)(\delta_Y+\delta)+i_2+i_3)}}{(\delta^\ast_{X,1,3}-2) (\delta^\ast_{X,1,3}-1) }
{~}_2F_1(\delta+i_3,2;\delta^\ast_{X,1,3};y/(1+y))
 dy
 \\
 &=\sum_{ \boldsymbol{i}\in \mathcal{I}}
d_{\boldsymbol{i}} \int_0^1
\frac{ (1-x)^{((\gamma+1)(\delta_Y+\delta)+i_2+i_3-2)}}{(\delta^\ast_{X,1,3}-2) (\delta^\ast_{X,1,3}-1) }
{~}_2F_1(\delta+i_3,2;\delta^\ast_{X,1,3};x)
dx \\
&=
\sum_{ \boldsymbol{i}\in \mathcal{I}}
d_{\boldsymbol{i}}
\frac{{}_3F_2(\delta+i_3,2,1;\delta^\ast_{X,1,3},(\gamma+1)\delta_Y^\ast+i_2+i_3;1)}{(\delta^\ast_{X,1,3}-2) (\delta^\ast_{X,1,3}-1)
 ((\gamma+1)\delta_Y^\ast+i_2+i_3-1) }.
\end{align*}
Since $\mathbf{E}[X-\mu_X]=\sigma_X/(\delta_X^\ast-1)$ and $\mathbf{E}[(1-F_X(X))^\gamma]=1/(\gamma+1)$, equation (\ref{gamma-10}) follows.
\end{proof}

\section*{References}
\def\hang{\hangindent=\parindent\noindent}

\hang
Arnold, B.C. (1983) \textrm{Pareto Distributions.} International Cooperative Publishing House, Fairland, MD.

\hang
Asimit, A.V., Vernic, R.\, and Zitikis, R.\, (2016).
Background risk models and stepwise portfolio construction.
\textrm{Methodology and Computing in Applied Probability} (in press).

\hang
Black, F.\, (1972).
Capital market equilibrium with restricted borrowing,
\textrm{Journal of Business}, \textrm{45}, 444--455.

\hang
Brazauskas, V. and Serfling, R.\, (2003).
Favourable estimators for fitting Pareto models: a study
using goodness-of-fit measures with actual data.
\textrm{ASTIN Bulletin: The Journal of the
International Actuarial Association}, \textrm{33}, 365--381.

\hang
Cuadras, C.M.\, (2002).
On the covariance between functions.
\textit{Journal of Multivariate Analysis}, \textrm{81}, 19--27.

\hang
Fang, K.T., Kotz, S. and Ng, K.W. (1990). \textrm{Symmetric
Multivariate and Related Distributions.} Chapman \& Hall, London.

\hang
Furman, E. and Zitikis, R.\, (2009).
Weighted pricing functionals with application to insurance: An overview.
\textrm{North American Actuarial Journal}, \textrm{13}, 483--496.

\hang
Goovaerts, M. J., Kaas, R., Laeven, R. J. A., Tang, Q. and Vernic, R.\,
(2005). The tail probability
of discounted sums of Pareto-like losses in insurance.
\textrm{Scandinavian Actuarial Journal}, \textrm{6}, 446--461.

\hang
Gradshteyn, I.S. and Ryzhik, I.M. (2007).
\textrm{Table of Integrals, Series and Products}
(7th edition). Academic Press, New York.

\hang
Greselin, F., Pasquazzi, L.\, and Zitikis, R.\, (2014).
Heavy tailed capital incomes: Zenga index,
statistical inference, and ECHP data analysis.
\textrm{Extremes}, \textrm{17}, 127--155.

\hang
Kendall, M. and Stuart, A.\, (1979).
\textrm{The Advanced Theory of Statistics}.
Charles Griffin, London.

\hang
Levy, H. (2011).
\textrm{The Capital Asset Pricing Model in the 21st Century:
Analytical, Empirical, and Behavioral Perspectives.}
Cambridge University Press, Cambridge.

\hang
Lintner, J.\, (1965).
Security prices, risk, and maximal gains from diversification,
\textrm{Journal of Finance}, \textrm{20}, 587--615.

\hang
Samanthi, R.G.M., Wei, W. and Brazauskas, V.\, (2016).
Ordering Gini indexes of multivariate
elliptical risks.
\textrm{Insurance: Mathematics and Economics}, \textrm{68}, 84--91.

\hang
Schechtman, E. and  Yitzhaki, S. (2013).
\textrm{The Gini Methodology: A Primer on a Statistical Methodology.}
Springer, New York.

\hang
Sharpe, W.F.\, (1964).
Capital asset prices: A theory of market equilibrium under
conditions of risk.
\textrm{Journal of Finance}, \textrm{19}, 425--442.

\hang
Shih, W.J. and Huang, W.M. (1992).
Evaluating correlations with proper bounds.
\textrm{Biometrics}, \textrm{48}, 1207--1213.

\hang
Sriboonchita, S., Wong, W.K., Dhompongsa, D.,  Nguyen, H.T.
(2009). \textrm{Stochastic Dominance and Applications to Finance,
Risk and Economics}. Chapman and Hall, Boca Raton, FL.

\hang
Su, J. and Furman, E. (2016). A form of multivariate Pareto
distribution with applications to financial risk measurement.
\textrm{ASTIN Bulletin: The Journal of the
International Actuarial Association} (in press).

\hang
Tversky, A.\, and Kahneman, D. (1992). Advances in prospect
theory: cumulative representation of uncertainty. \textrm{Journal of
Risk and Uncertainty}, \textrm{5}, 297--323.

\hang
Wakker, P.P. (2010). \textrm{Prospect Theory: For Risk and Ambiguity.}
Cambridge University Press, Cambridge.

\end{document}